\theoremstyle{plain}
\newtheorem{theorem}{Theorem}
\numberwithin{equation}{section}
\newcommand{\ra}{\rightarrow}
\newcommand{\R}{\mathbb{R}}
\newcommand{\mdot}{\,\begin{picture}(-1,-1)(-1,-1)\circle*{2}\end{picture}\ }
\begin{document}

\title { The triple-zero Painlev\'e I transcendent}

\date{}

\author[P.L. Robinson]{P.L. Robinson}

\address{Department of Mathematics \\ University of Florida \\ Gainesville FL 32611  USA }

\email[]{paulr@ufl.edu}

\subjclass{} \keywords{}

\begin{abstract}

We offer elementary proofs for fundamental properties of the unique triple-zero solution to the first Painlev\'e equation.

\end{abstract}

\maketitle

\medbreak

\section*{Introduction} 

\medbreak 

The six Painlev\'e equations emerged from the research of Painlev\'e, Gambier and Fuchs in answer to a question of Picard as to when the solutions of certain second-order ordinary differential equations have only poles among their movable singularities; solutions of these six equations are often referred to as Painlev\'e transcendents. These ODEs play important r\^oles in numerous diverse fields: for example, they appear as self-similar reductions of PDEs including the Korteweg de Vries equation, whose birth was well nigh coincident with the birth of the Painlev\'e equations themselves. 

\medbreak 

As befits their importance, the Painlev\'e equations have been subjected to intense scrutiny using highly sophisticated techniques. Our purpose here is extremely modest: we consider only the first Painlev\'e equation
\begin{equation} \label{P_I}
w''(z) = 6 w(z)^2 + z \tag{{\bf PI}}
\end{equation}
and only as a real equation; moreover, we only consider the unique solution of this equation that has a third-order zero at the origin. In line with these modest aims, we establish some of the most fundamental properties of this triple-zero Painlev\'e transcendent, giving explicit proofs and using elementary methods. 

\medbreak

\section*{The Transcendent} 

\medbreak 

Symmetries of the first Painlev\'e equation \ref{P_I} are important and will be useful in what follows. Let $w$ be any solution of \ref{P_I}; let $\alpha$ and $\beta$ be nonzero scalars. If $W(z) = \beta w (\alpha z)$ then 
$$\beta W''(z) = 6 \alpha^2  W(z)^2 + \alpha^3 \beta^2 z$$
by direct calculation. In particular, if $\alpha$ is a fifth-root of unity and $\beta = \alpha^2$ then $W$ is also a solution of \ref{P_I}. To put this another way, if $w$ is a solution of \ref{P_I} and $\gamma$ is a fifth-root of unity then the rule $W(z):= \gamma^2 w(\gamma z)$ also defines a solution of \ref{P_I}; thus the group of fifth-roots of unity acts on the set of Painlev\'e I transcendents. This being so, it is natural to ask whether there exist Painlev\'e I transcendents that are invariant under this action. An examination of Taylor series about the origin reveals two invariant transcendents: one of these has a double pole at the origin; the other has a triple zero at the origin (and will be our focus, as follows). 

\medbreak 

By the {\it triple-zero Painlev\'e I transcendent} we shall mean the unique solution of \ref{P_I} that vanishes along with its derivative at the origin. As indicated in our Introduction, we shall regard the triple-zero Painlev\'e I transcendent as a real function $s$ of a real variable $t$ and shall take it as being defined on its maximal domain, an open interval of reals containing the origin. Further, we shall write $\overset{\mdot}{s}$  rather than $s'$ for the derivative; this allows $\overset{\mdot}{s}\:^ 2$  rather than $s'^2$ or $(s')^2$ for its square. Note that $s(0) = \overset{\mdot}{s} (0) = \overset{\mdot \mdot}{s}(0) = 0$ and that $\overset{\mdot \mdot \mdot}{s}(0) = 12 s (0) \overset{\mdot}{s}(0) + 1 = 1 \neq 0$ so that $s$ indeed has a triple zero at the origin. In fact, the only point at which a solution of \ref{P_I} can have a third-order zero is the origin, as is plain from the equation itself; accordingly, the stipulation `triple-zero' singles out $s$ from among all Painlev\'e I transcendents.

\medbreak 

Actually, we shall find it convenient to cast \ref{P_I} into equivalent forms by rescaling. The choice $\alpha = 6^{1/5}, \beta = 6^{2/5}$ casts \ref{P_I} in the form 
$$W''(z) = 6 W(z)^2 + 6 z$$
used by Boutroux in his asymptotic analysis of the first Painlev\'e equation. In the section on `{\it Finite-time Blow-up}' we shall take $s$ to be the triple-zero solution of {\it this} equation and examine its behaviour for nonnegative real arguments. The choice $\alpha = - 6^{1/5}, \beta = - 6^{2/5}$ casts \ref{P_I} in the form
$$W''(z) = 6 z - 6 W(z)^2 $$
and in the section on `{\it Oscillatory Behaviour}' we take $s$ to be the triple-zero solution of {\it this} equation, analyzing it for nonnegative real arguments. Here, the rescaling has a definite aesthetic advantage: the discussion of this form leads to expressions in $\sqrt t$; discussion of the original form \ref{P_I} involves the unsightly $\sqrt { - t/6}$. Taken together, these two sections cover the triple-zero Painlev\'e I transcendent on the whole of its maximal real domain.  

\medbreak 

\section*{Finite-time Blow-up} 

\medbreak 

In this section, $s$ will denote the unique solution of the first Painlev\'e equation in the form 
\begin{equation} \label{PI+}
\overset{\mdot \mdot}{s}(t) = 6 s(t)^2 + 6 t \tag{{\bf PI+}}
\end{equation}
with initial data $s(0) = \overset{\mdot}{s} (0) = 0$ so that $\overset{\mdot \mdot}{s}(0) = 0$ and $\overset{\mdot \mdot \mdot}{s}(0) = 6 > 0$. We regard $s$ as being defined on its maximal real domain, which is an open interval $I$ about the origin. Our purpose here is to consider the behaviour of $s$ on $I \cap [0, \infty)$. 

\medbreak 

It is at once clear that both the transcendent $s$ and its derivative $\overset{\mdot}{s}$ are strictly positive at each strictly positive point in the maximal real domain $I$: in fact, if $t > 0$ lies in $I$ then $\overset{\mdot \mdot}{s}(t) = 6 s(t)^2 + 6 t > 6 t$ so that successive  integrations yield $\overset{\mdot}{s}(t) > 3 t^2$ and $s(t) > t^3$ when the initial data are taken into account. 

\medbreak 

We claim that the transcendent $s$ blows up in finite time: there exists $t_{\infty} > 0$ such that $s(t) \rightarrow \infty$ as $t \uparrow t_{\infty}$; thus, $I \cap [0, \infty) = [0, t_{\infty})$. 

\medbreak

\begin{theorem} \label{+upper}
If $\tau > 0$ is an arbitrary positive element of $I$ then $\tau + s(\tau)^{-1/2}$ lies outside $I$. 
\end{theorem}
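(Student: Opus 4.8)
The plan is to derive a differential inequality that forces $s$ to blow up by the claimed time. From the remarks already established, on $I \cap (0,\infty)$ we have $s > 0$, $\overset{\mdot}{s} > 0$, and $\overset{\mdot\mdot}{s} = 6 s^2 + 6t > 6 s^2$. First I would multiply this last inequality by the positive factor $\overset{\mdot}{s}$ and integrate from $\tau$ to a general later point $t$; this is the standard energy trick and yields $\overset{\mdot}{s}(t)^2 - \overset{\mdot}{s}(\tau)^2 > 4\bigl(s(t)^3 - s(\tau)^3\bigr)$, hence in particular $\overset{\mdot}{s}(t)^2 > 4 s(t)^3 - 4 s(\tau)^3$ for $t > \tau$. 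That bound alone is not quite in the cleanest shape, so I would instead look for a comparison that produces exactly the exponent $-1/2$ appearing in the statement.

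The cleaner route: since $\overset{\mdot\mdot}{s} > 6 s^2$ and $s$ is increasing past $\tau$, we have $\overset{\mdot\mdot}{s}(t) > 6 s(\tau) s(t)$ for $t \ge \tau$; writing $k = 6 s(\tau) > 0$, the function $s$ dominates, on $[\tau, \cdot)$, the solution of the linear problem $\overset{\mdot\mdot}{u} = k u$ with matching data $u(\tau) = s(\tau)$, $\overset{\mdot}{u}(\tau) = \overset{\mdot}{s}(\tau) > 0$. A standard comparison argument (subtract the equations, note the difference $v = s - u$ satisfies $v(\tau) = \overset{\mdot}{v}(\tau) = 0$ and $\overset{\mdot\mdot}{v} \ge k v + (\text{positive})$, so $v$ stays nonnegative) gives $s(t) \ge u(t) = s(\tau)\cosh(\sqrt{k}(t - \tau)) + \overset{\mdot}{s}(\tau) k^{-1/2}\sinh(\sqrt{k}(t-\tau)) \ge s(\tau)\cosh(\sqrt{k}(t-\tau))$. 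However, $\cosh$ does not blow up in finite time, so this linear comparison is too weak on its own; I would need to iterate or bootstrap it.

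The genuinely effective approach is the superlinear comparison. Keep $\overset{\mdot}{s}(t)^2 > 4 s(t)^3 - 4 s(\tau)^3$ from the energy estimate; for $t$ with $s(t) \ge 2^{1/3} s(\tau)$ this gives $\overset{\mdot}{s}(t)^2 > 2 s(t)^3$, i.e. $\overset{\mdot}{s}(t) > \sqrt{2}\, s(t)^{3/2}$ on that range. Then $\dfrac{\overset{\mdot}{s}(t)}{s(t)^{3/2}} > \sqrt{2}$, and integrating $\dfrac{d}{dt}\bigl(-2 s(t)^{-1/2}\bigr) > \sqrt 2$ shows $s(t)^{-1/2}$ decreases at rate at least $\sqrt2/2$, so it must hit $0$ — forcing $s \to \infty$ — within time $\sqrt{2}\, s(t_1)^{-1/2}$ of any starting point $t_1$ in the superlinear regime. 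To land the sharp constant $s(\tau)^{-1/2}$ claimed in the theorem, I would not split off a preliminary regime at all: instead retain the full inequality $\overset{\mdot}{s}^2 \ge 4 s^3$ that actually holds (the cleanest version comes from integrating $\overset{\mdot\mdot}{s}\,\overset{\mdot}{s} > 6 s^2 \overset{\mdot}{s}$ from $0$, using $s(0)=\overset{\mdot}{s}(0)=0$, to get $\overset{\mdot}{s}^2 \ge 4 s^3$ on all of $I\cap[0,\infty)$, with strict inequality for $t>0$), hence $\overset{\mdot}{s} \ge 2 s^{3/2}$, hence $\dfrac{d}{dt}\bigl(s^{-1/2}\bigr) = -\tfrac12 s^{-3/2}\overset{\mdot}{s} \le -1$; integrating from $\tau$ to any later $t\in I$ gives $s(t)^{-1/2} \le s(\tau)^{-1/2} - (t-\tau)$, which is impossible once $t - \tau$ reaches $s(\tau)^{-1/2}$. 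Therefore every $t \ge \tau + s(\tau)^{-1/2}$ fails to lie in $I$, which is the assertion.

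The main obstacle I anticipate is bookkeeping the strictness/nonstrictness correctly at the origin so that the clean energy identity $\overset{\mdot}{s}^2 \ge 4 s^3$ is justified by integrating from $0$ (where all data vanish) rather than from $\tau$, since integrating from $\tau$ leaves the unwanted $-4 s(\tau)^3$ term and only the $t=0$ base point makes the constant in the theorem exact; everything after that is a one-line separation-of-variables estimate.
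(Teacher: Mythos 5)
Your final argument is correct and is essentially the paper's proof: multiply $\overset{\mdot\mdot}{s} > 6 s^2$ by $\overset{\mdot}{s}$ and integrate from $0$ (where all data vanish) to obtain $\overset{\mdot}{s}^{\:2} > 4 s^3$ for $t>0$, hence $\overset{\mdot}{s} > 2 s^{3/2}$, then integrate the resulting bound on $(s^{-1/2})^{\mdot}$ from $\tau$ to conclude blow-up by time $\tau + s(\tau)^{-1/2}$. The exploratory detours you correctly discard (the linear/$\cosh$ comparison, the $2^{1/3}$-threshold bootstrap) are unnecessary; the route you ultimately commit to matches the paper's step for step, including the crucial point that integrating the energy inequality from the origin rather than from $\tau$ is what yields the clean constant.
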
 

\begin{proof} 
 If $t > 0$ is in $I$ then $\overset{\mdot \mdot}{s}(t) = 6 s(t)^2 + 6 t > 6 s(t)^2$; thus $2 \overset{\mdot}{s} \: \overset{\mdot \mdot}{s}> 12 s^2 \overset{\mdot}{s}$ so that $\overset{\mdot}{s}^2 - 4 s^3$ has strictly positive derivative and is therefore strictly increasing on $I \cap [0, \infty)$. Consequently, 
$$0 < t \in I \Rightarrow \overset{\mdot}{s}(t) > 2 s(t)^{3/2}$$
or 
$$0 < t \in I \Rightarrow -  \frac{1}{2} s(t)^{-3/2} \overset{\mdot}{s}(t) < - 1.$$
Now, let $\tau > 0$ be an arbitrary positive element of $I$. If $\tau < t \in I$ then by integration
$$s(t)^{-1/2} - s(\tau)^{-1/2} < - t + \tau$$
so that 
$$s(t)^{1/2} > \frac{1}{s(\tau)^{-1/2} + \tau - t}.$$
Here, the right side blows up at $t = \tau + s(\tau)^{-1/2}$; consequently, the left side blows up at this time or sooner. 
\end{proof}

\medbreak

Thus: if $0 < \tau \in I$ then $\tau < t_{\infty} \leqslant \tau + s(\tau)^{-1/2}$ and therefore $\tau < t_{\infty} < \tau + \tau^{-3/2}$. 

\medbreak 

We can place a definite lower bound on the finite time to blow-up as follows. 

\medbreak 

\begin{theorem} \label{+lower}
The blow-up time $t_{\infty}$ satisfies 
$$t_{\infty} > \sqrt{\frac{3}{2}} \arctan \sqrt{\frac{2}{3}} + \frac{2}{3} \log \frac{5}{2}. $$
\end{theorem}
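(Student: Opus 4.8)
The plan is to use $s$ rather than $t$ as the independent variable. Since $\overset{\mdot}{s} > 0$ throughout $(0, t_{\infty})$, the map $t \mapsto s(t)$ is a strictly increasing $C^1$ bijection of $(0, t_{\infty})$ onto $(0, \infty)$; thus $\mathrm{d} t / \mathrm{d} s = 1 / \overset{\mdot}{s}$ and, as $s$ runs from $0$ to $\infty$ while $t$ runs from $0$ to $t_{\infty}$,
$$ t_{\infty} = \int_0^{\infty} \frac{\mathrm{d} s}{\overset{\mdot}{s}} . $$
It then suffices to majorize $\overset{\mdot}{s}$ by an explicit function of $s$ whose reciprocal can be integrated in closed form. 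A single majorant will not serve on the whole range, so I would split at the unique instant $t_1$ with $s(t_1) = 1$, writing $t_{\infty} = t_1 + (t_{\infty} - t_1)$ with $t_1 = \int_0^1 \mathrm{d} s / \overset{\mdot}{s}$ and $t_{\infty} - t_1 = \int_1^{\infty} \mathrm{d} s / \overset{\mdot}{s}$, and estimate the two pieces separately.

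The crux is a sharpened form of the energy identity behind Theorem \ref{+upper}. Since $\tfrac{\mathrm{d}}{\mathrm{d} t} ( \overset{\mdot}{s}^2 - 4 s^3 ) = 2 \overset{\mdot}{s} ( \overset{\mdot \mdot}{s} - 6 s^2 ) = 12\, t\, \overset{\mdot}{s}$, and this vanishes at the origin, an integration by parts gives
$$ \overset{\mdot}{s}(t)^2 - 4 s(t)^3 = 12 \int_0^t \sigma\, \overset{\mdot}{s}(\sigma)\, \mathrm{d} \sigma = 12\, t\, s(t) - 12 \int_0^t s(\sigma)\, \mathrm{d} \sigma . $$
Into this I would feed the bound $s(\sigma) > \sigma^3$ already on record, so that $\int_0^t s > t^4 / 4$ and hence $\overset{\mdot}{s}^2 - 4 s^3 < 12\, t\, s - 3 t^4$. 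Because $s > t^3$ forces $t < s^{1/3}$, a short computation — that $\tau \mapsto 12\, \tau\, s - 3 \tau^4$ increases on $[0, s^{1/3}]$, or equivalently the factorization $t^4 - 4 s t + 3 s^{4/3} = ( t - s^{1/3} )^2 ( t^2 + 2 s^{1/3} t + 3 s^{2/3} ) \geqslant 0$ — upgrades this to the uniform estimate
$$ \overset{\mdot}{s}^2 < 4 s^3 + 9 s^{4/3} \qquad \text{on } I \cap (0, \infty) . $$
Retaining the term $-12 \int_0^t s$ is essential here: simply dropping it leaves only $\overset{\mdot}{s}^2 < 4 s^3 + 12 s^{4/3}$, which is too weak near $s = 0$ for the first estimate below.

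It remains to route this estimate through the two ranges of $s$. For $0 < s \leqslant 1$ one has $s \leqslant 3 + s^{2/3}$, which rearranges to $4 s^3 + 9 s^{4/3} \leqslant ( 3 s^{2/3} + 2 s^{4/3} )^2$; hence $\overset{\mdot}{s} < 3 s^{2/3} + 2 s^{4/3}$ there, and with the substitution $x = s^{1/3}$,
$$ t_1 > \int_0^1 \frac{\mathrm{d} s}{3 s^{2/3} + 2 s^{4/3}} = \int_0^1 \frac{3\, \mathrm{d} x}{3 + 2 x^2} = \sqrt{\tfrac{3}{2}}\, \arctan \sqrt{\tfrac{2}{3}} . $$
For $s \geqslant 1$ one has $s^{4/3} \leqslant s^{5/2}$, which rearranges to $4 s^3 + 9 s^{4/3} \leqslant ( 2 s^{3/2} + 3 s )^2$; hence $\overset{\mdot}{s} < 2 s^{3/2} + 3 s$ there, and with the substitution $x = s^{1/2}$ followed by a partial-fraction decomposition,
$$ t_{\infty} - t_1 > \int_1^{\infty} \frac{\mathrm{d} s}{2 s^{3/2} + 3 s} = \int_1^{\infty} \frac{2\, \mathrm{d} x}{x ( 2 x + 3 )} = \frac{2}{3} \log \frac{5}{2} . $$
Adding the two inequalities yields the theorem. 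I expect the one genuine obstacle to be locating the sharp bound $\overset{\mdot}{s}^2 < 4 s^3 + 9 s^{4/3}$ — in particular realizing that the integral term in the energy identity must be kept and estimated via $s > \sigma^3$ rather than discarded; once it is in hand, the elementary majorizations $s \leqslant 3 + s^{2/3}$ (on $0 < s \leqslant 1$) and $s^{4/3} \leqslant s^{5/2}$ (on $s \geqslant 1$), together with the substitutions $x = s^{1/3}$ and $x = s^{1/2}$, finish matters routinely.
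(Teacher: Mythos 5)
Your argument is correct and reproduces the theorem; at bottom it is the same proof as the paper's, reaching the identical pivotal estimate $\overset{\mdot}{s}\:^2 < 4s^3 + 9s^{4/3}$, writing the residence time as $\int_0^\infty \rd s / \overset{\mdot}{s}$, and splitting at $s = 1$ to land on the same two closed-form integrals. The differences are in bookkeeping, and they cut in the paper's favour. To obtain $\overset{\mdot}{s}\:^2 < 4s^3 + 9s^{4/3}$ the paper substitutes $t < s^{1/3}$ directly into \ref{PI+} (giving $\overset{\mdot\mdot}{s} < 6s^2 + 6s^{1/3}$) and only then multiplies by $2\overset{\mdot}{s}$ and integrates; the factor $\tfrac{3}{4}$ in $\int s^{1/3}\,\rd s$ produces the $9$ with no further thought, so the ``genuine obstacle'' you flag --- retaining and sharpening the $\int_0^t s$ term in the energy identity and spotting the quartic factorization --- is an artifact of forming the energy identity too early in the argument. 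To pass from $\overset{\mdot}{s}\:^2 < 4s^3 + 9s^{4/3}$ to an explicit majorant for $\overset{\mdot}{s}$, the paper completes a single square (adding $12s^{13/6}$) to obtain one bound $\overset{\mdot}{s} < 2s^{3/2} + 3s^{2/3}$ valid on all of $(0,\infty)$ and then approximates that integrand piecewise on $[0,1]$ and $[1,\infty)$; you instead complete two range-specific squares to produce $3s^{2/3} + 2s^{4/3}$ and $2s^{3/2} + 3s$, whose reciprocals integrate exactly. (One small imprecision: for $s \geqslant 1$ the inequality $s^{4/3} \leqslant s^{5/2}$ does not literally rearrange to $4s^3 + 9s^{4/3} \leqslant (2s^{3/2}+3s)^2$ --- one still needs $9s^{5/2} \leqslant 12s^{5/2} + 9s^2$, which is of course obvious --- but the step is harmless.) The two routes are equivalent in substance and yield the same numerical lower bound.
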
 

\begin{proof} 
If $0 < t \in I$ then $s(t) > t^3$ so that \ref{PI+} yields
$$\overset{\mdot \mdot}{s} (t) < 6 s(t)^2 + 6 s(t)^{1/3}$$
whence multiplication by $2 \overset{\mdot}{s}(t)$ and integration from $0$ to $t$ yield 
$$\overset{\mdot} {s} (t)^2 < 4 s(t)^3 + 9 s(t)^{4/3}.$$
Complete the square, pointwise adding $2 \cdot 2 \cdot 3 \cdot s^{3/2} \cdot s^{2/3}$ to each side: 
$$\overset{\mdot}{s} \: ^2 + 12 s^{13/6} < (2 s^{3/2} + 3 s^{2/3})^2$$
so that certainly 
$$\overset{\mdot}{s} < 2 s^{3/2} + 3 s^{2/3}.$$
Let $t$ run from $0$ to $T$ and $s$ run from $0$ to $S$ correspondingly: integration then yields 
$$T > \int_0^S  \frac{{\rm d} s}{2 s^{3/2} + 3 s^{2/3}};$$
the limits $T \uparrow t_{\infty}$ and $S \uparrow \infty$ correspond, so 
$$t_{\infty} \geqslant \int_0^{\infty} \frac{{\rm d} s}{2 s^{3/2} + 3 s^{2/3}}.$$
Rather than evaluate this lower bound for $t_{\infty}$ directly, we settle for a reasonable lower estimate. 
For $0 < s < 1$ we substitute $s = r^3$ and note that $r^{5/2} < r^2$ to derive 
$$\int_0^1 \frac{{\rm d} s}{2 s^{3/2} + 3 s^{2/3}} = \int_0^1 \frac{{\rm d} r}{\frac{2}{3} r^{5/2} + 1} > \int_0^1 \frac{{\rm d} r}{\frac{2}{3} r^2 + 1} = \sqrt{\frac{3}{2}} \arctan \sqrt{\frac{2}{3}}.$$ 
For $s > 1$ we note that $s^{2/3} < s$ and substitute $s = r^2$ to derive
$$\int_1^{\infty} \frac{{\rm d} s}{2 s^{3/2} + 3 s^{2/3}} > \int_1^{\infty} \frac{{\rm d} s}{2 s^{3/2} + 3 s} = \int_1^{\infty} \frac{{\rm d} r}{r^2 + \frac{3}{2} r} = \frac{2}{3} \log \frac{5}{2}. $$
\end{proof} 

\medbreak

Theorem \ref{+upper} tells us that if $0 < \tau \in I$ then $\tau < t_{\infty} < \tau + \tau^{-3/2}$ while Theorem \ref{+lower} places $\sqrt{3/2} \arctan \sqrt{2/3} + (2/3) \log (5/2) = 1.449...\:$ in $I$.  The expression $\tau + \tau^{-3/2}$ has its minimum value $(3/2)^{2/5} + (2/3)^{3/5}$ at $(3/2)^{2/5} = 1.176 ... \in I.$ Accordingly, $(3/2)^{2/5} + (2/3)^{3/5} = 1.960 ...\:$ is a definite upper bound on $t_{\infty}$. 

\medbreak 

These inexpensive definite bounds $1.449... < t_{\infty} < 1.960 ...\:$ are not too bad; closer analysis shows that finite-time blow-up occurs between 1.82 and 1.83.

\medbreak 

\section*{Oscillatory Behaviour} 

\medbreak 

In this section, $s$ will denote the unique solution of the first Painlev\'e equation in the form 
\begin{equation} \label{PI-}
\overset{\mdot \mdot}{s}(t) = 6 t - 6 s(t)^2 \tag{{\bf PI-}}
\end{equation}
with initial data $s(0) = \overset{\mdot}{s} (0) = 0$. Again we regard this $s$ as defined on its maximal real domain, which is now the open interval $J = \{ - t : t \in I \}$. Our purpose here is to consider the behaviour of $s$ on $J \cap [0, \infty) = [0, \infty)$, to which the discussion in this section is restricted.

\medbreak 

The following first integral of \ref{PI-} will generate some useful estimates. 

\medbreak 

\begin{theorem} \label{int}
If $t > 0$ then 
$$\overset{\mdot}{s}(t)^2 + 4 s(t)^3 + 12 \int_0^t s = 12 t s(t).$$
\end{theorem}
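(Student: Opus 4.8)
The plan is to produce this first integral by the standard device of multiplying \ref{PI-} by an integrating factor and integrating, the only twist being that one of the resulting terms is not exact and must be handled by parts.

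First I would multiply \ref{PI-} through by $2\overset{\mdot}{s}(t)$, obtaining $2\overset{\mdot}{s}\:\overset{\mdot \mdot}{s} = 12 t \,\overset{\mdot}{s} - 12 s^2 \overset{\mdot}{s}$, which I recognize as $\tfrac{\rd}{\rd t}\bigl(\overset{\mdot}{s}\:^2 + 4 s^3\bigr) = 12 t \,\overset{\mdot}{s}$. The right-hand side is not itself an exact derivative, but the product rule gives $12 t \,\overset{\mdot}{s} = \tfrac{\rd}{\rd t}\bigl(12 t s\bigr) - 12 s$; rearranging, $\tfrac{\rd}{\rd t}\bigl(\overset{\mdot}{s}\:^2 + 4 s^3 - 12 t s\bigr) = -12 s$.

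Next I would integrate this last identity from $0$ to $t$. On the left the antiderivative is evaluated at the endpoints; on the right the term $\int_0^t 12 s$ appears with a minus sign, which I transpose to the left as $+\,12\int_0^t s$. At the lower limit the combination $\overset{\mdot}{s}(0)^2 + 4 s(0)^3 - 0$ vanishes because $s(0) = \overset{\mdot}{s}(0) = 0$, so no integration constant survives. This delivers $\overset{\mdot}{s}(t)^2 + 4 s(t)^3 - 12 t s(t) + 12\int_0^t s = 0$, which is precisely the claimed identity.

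The only point needing any care — scarcely an obstacle — is noticing that the forcing term $12 t\,\overset{\mdot}{s}$ fails to be exact on its own and so must be treated by integration by parts; this is exactly what introduces the integral term $\int_0^t s$ into the first integral. Everything else is routine differentiation, and the vanishing initial data kills the constant.
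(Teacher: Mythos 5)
Your proof is correct and follows exactly the same route as the paper: multiply \ref{PI-} by $2\overset{\mdot}{s}$, integrate from $0$ to $t$, handle the $12t\,\overset{\mdot}{s}$ term by parts, and use the initial data to kill the boundary contributions. The paper compresses all this into one sentence; you have merely supplied the details it leaves implicit.
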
 

\begin{proof} 
Simply multiply \ref{PI-} throughout by $2 \overset{\mdot}{s}(t)$ and then integrate from $0$ to $t$ while invoking $s(0) = \overset{\mdot}{s}(0) = 0$. 
\end{proof} 

\medbreak 

The circumstance $\overset{\mdot \mdot \mdot}{s}(0) = 6 > 0$ implies that the triple-zero transcendent $s$ is initially increasing and therefore initially positive. An immediate first corollary of Theorem \ref{int} is that this positivity persists. 

\medbreak 

\begin{theorem} \label{pos}
If $t > 0$ then $s(t) > 0$. 
\end{theorem}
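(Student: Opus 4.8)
The plan is to argue by contradiction, feeding the first integral of Theorem \ref{int} the information that $s$ vanishes. We start from what has already been observed: since $\overset{\mdot \mdot \mdot}{s}(0) = 6 > 0$, the transcendent $s$ is strictly increasing, hence strictly positive, on some interval $(0, \varepsilon)$ just to the right of the origin. Suppose, contrary to the assertion, that $s$ has a zero somewhere in $(0, \infty)$, and set $t_0 := \inf \{ t > 0 : s(t) = 0 \}$. Continuity of $s$ together with this initial positivity forces $t_0 \geqslant \varepsilon > 0$ and $s(t_0) = 0$, while minimality of $t_0$ guarantees $s(t) > 0$ for every $t \in (0, t_0)$.

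Next I evaluate the identity of Theorem \ref{int} at the point $t = t_0$. Because $s(t_0) = 0$, the term $4 s(t_0)^3$ and the right-hand side $12 t_0 s(t_0)$ both vanish, and we are left with
$$\overset{\mdot}{s}(t_0)^2 + 12 \int_0^{t_0} s = 0.$$
Here $\overset{\mdot}{s}(t_0)^2 \geqslant 0$, and since $s > 0$ throughout $(0, t_0)$ the integral $\int_0^{t_0} s$ is strictly positive; hence the left-hand side is strictly positive, which is absurd. Therefore $s$ has no zero in $(0, \infty)$. As $s$ is positive near the origin, the intermediate value theorem then precludes $s$ from taking any negative value on $(0, \infty)$, and we conclude that $s(t) > 0$ for all $t > 0$.

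I do not anticipate any genuine obstacle here: once Theorem \ref{int} is available the argument is essentially forced. The only points requiring a little care are structural rather than computational — namely, verifying that the candidate first zero $t_0$ is strictly positive (so that $s$ is strictly, not merely weakly, positive on $(0, t_0)$, which is what makes $\int_0^{t_0} s$ strictly positive) and that $t_0$ is an honest zero of $s$ rather than a mere accumulation point at which $s$ might fail to vanish; both follow immediately from the continuity of $s$ and the initial positivity already noted. One could equally well phrase this without contradiction, reading the identity as $12 \int_0^t s = 12 t s(t) - \overset{\mdot}{s}(t)^2 - 4 s(t)^3$ and tracking the sign of the left-hand side, but the minimal-zero formulation above seems cleanest.
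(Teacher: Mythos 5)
Your proof is correct and follows essentially the same route as the paper's: both take $\tau$ (your $t_0$) to be the infimum of the positive zeros of $s$, substitute $s(\tau)=0$ into the first integral of Theorem \ref{int}, and read off the contradiction $\overset{\mdot}{s}(\tau)^2 + 12\int_0^\tau s = 0$ with a strictly positive left side. The only difference is that you spell out the continuity and initial-positivity bookkeeping a bit more explicitly, which the paper leaves implicit.
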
 

\begin{proof} 
Aim at a contradiction by supposing that $s$ has a positive zero, in which case $\tau := \inf \{ t > 0 : s(t) = 0 \}$ satisfies $s(\tau) = 0$ and is itself strictly positive. As $s$ is strictly positive on the interval $(0, \tau)$ it follows from Theorem \ref{int} that 
$$0 = 12 \tau s(\tau) - 4 s(\tau)^3 = \overset{\mdot}{s}(\tau)^2 + 12 \int_0^{\tau} s > 0.$$
This contradiction faults the supposition and concludes the proof. 
\end{proof} 

\medbreak 

A further corollary is an upper bound on the transcendent. 

\begin{theorem} \label{upper}
If $t > 0$ then $s(t) < \sqrt{3 t}$. 
\end{theorem}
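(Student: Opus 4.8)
The plan is to exploit the first integral in Theorem~\ref{int} together with the positivity established in Theorem~\ref{pos}. Rearranging Theorem~\ref{int} gives
$$12 t s(t) - 4 s(t)^3 = \overset{\mdot}{s}(t)^2 + 12 \int_0^t s,$$
and since $s > 0$ on $(0, \infty)$ by Theorem~\ref{pos}, the right-hand side is strictly positive for $t > 0$. Hence $12 t s(t) - 4 s(t)^3 > 0$, i.e. $3 t > s(t)^2$, and taking square roots (again using $s(t) > 0$) yields $s(t) < \sqrt{3 t}$.

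The key steps, in order, are: (i) invoke Theorem~\ref{int} to write $12 t s(t) - 4 s(t)^3 = \overset{\mdot}{s}(t)^2 + 12 \int_0^t s$; (ii) observe that by Theorem~\ref{pos} the integrand $s$ is positive on $(0, t]$, so $\int_0^t s > 0$, whence the whole right-hand side is strictly positive; (iii) conclude $4 s(t)^3 < 12 t s(t)$; (iv) divide through by the positive quantity $4 s(t)$ to get $s(t)^2 < 3 t$; (v) take the positive square root. Each of these is a one-line deduction.

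There is really no serious obstacle here: the work was done in the two preceding theorems, and this is a direct corollary, as the paper's own phrasing ("a further corollary") signals. The only point requiring a moment's care is step (iv) — the division by $s(t)$ — which is legitimate precisely because $t > 0$ and Theorem~\ref{pos} guarantees $s(t) \neq 0$; one should note in passing that at $t = 0$ the inequality degenerates (both sides vanish), which is why the hypothesis $t > 0$ is stated. So I expect the entire proof to occupy just two or three lines.
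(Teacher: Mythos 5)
Your proof is correct and follows exactly the paper's route: invoke Theorem~\ref{int} and Theorem~\ref{pos} to conclude $4 s(t)^3 < 12 t s(t)$, cancel the positive factor $s(t)$, and take square roots. You have simply spelled out the one-line deduction that the paper compresses.
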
 

\begin{proof} 
Bearing Theorem \ref{pos} in mind, if $t > 0$ then Theorem \ref{int} yields $4 s(t)^3 < 12 t s(t)$ whence $s(t)^2 < 3 t$ and $s(t) < \sqrt{3 t}$ as asserted. 
\end{proof} 

\medbreak 

We may also establish additional bounds, as follows. 

\medbreak 

\begin{theorem} \label{3/28}
If $t > 0$ then $t^3 > s(t) > t^3 - \frac{3}{28} t^8$. 
\end{theorem}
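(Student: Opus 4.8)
The plan is a two-stage bootstrap built on the positivity of $s$ recorded in Theorem~\ref{pos}. First I would establish the upper bound: for $t > 0$ the positivity of $s$ gives $\overset{\mdot \mdot}{s}(t) = 6t - 6 s(t)^2 < 6t$, and integrating this strict inequality from $0$ to $t$ (invoking $\overset{\mdot}{s}(0) = 0$) yields $\overset{\mdot}{s}(t) < 3 t^2$; a second integration (invoking $s(0) = 0$) then yields $s(t) < t^3$. The strictness is preserved at each step because the inequality being integrated holds strictly throughout the interval $(0,t]$ while both sides agree at $0$.

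Next I would feed this upper bound back into \ref{PI-} to obtain the lower bound. Since $0 < s(t) < t^3$ for $t > 0$, we have $s(t)^2 < t^6$, hence $\overset{\mdot \mdot}{s}(t) = 6t - 6 s(t)^2 > 6t - 6 t^6$. Integrating from $0$ to $t$ gives $\overset{\mdot}{s}(t) > 3 t^2 - \tfrac{6}{7} t^7$, and a further integration gives $s(t) > t^3 - \tfrac{6}{56} t^8 = t^3 - \tfrac{3}{28} t^8$, which is exactly the asserted inequality.

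There is no genuine obstacle here; the only point deserving a moment's attention is the claim that strict inequalities survive the integrations, which is immediate for the reason noted above. It is perhaps worth remarking that the same device could be iterated — replacing $s(t)^2 < t^6$ by the sharper bound coming from $s(t) > t^3 - \tfrac{3}{28} t^8$, and so on — to trap $s$ between successively tighter polynomial bounds near the origin, but the two-term sandwich stated in the theorem is all that the argument requires.
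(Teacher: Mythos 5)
Your argument is correct and is exactly the paper's: integrate $\overset{\mdot\mdot}{s} < 6t$ twice to get $s < t^3$, feed that back to get $\overset{\mdot\mdot}{s} > 6t - 6t^6$, and integrate twice more. Your closing remark about iterating the bootstrap is also precisely the device the paper uses later in its \emph{Remarks} section to sharpen these bounds.
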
 

\begin{proof} 
If $t > 0$ then  $\overset{\mdot \mdot}{s}(t) = 6 t - 6 s(t)^2 < 6 t$ whence $\overset{\mdot}{s}(t) <  3 t^2$ and $s(t) < t^3$ by successive integrations; in turn, $\overset{\mdot \mdot}{s}(t) = 6 t - 6 s(t)^2 > 6 t - 6 t^6$
whence successive integrations yield $\overset{\mdot}{s}(t) > 3 t^2 - \frac{6}{7} t^7$ and $s(t) > t^3 - \frac{3}{28} t^8$. 
\end{proof} 

\medbreak 

Now, the equation \ref{PI-} compels each of its solutions to inflect on the parabola `$s^2 = t$', to be concave up in the region `$s^2 < t$' and to be concave down in the region `$s^2 > t$'. In particular, this is true of the triple-zero transcendent $s$: it is initially concave up and increasing; this increase continues until the vanishing of $\overset{\mdot}{s}$; the mean value theorem ensures that this cannot occur until after the vanishing of $\overset{\mdot \mdot}{s}$; and this last event does occur, because the square-root function is concave down. 

\medbreak 

\begin{theorem} \label{t0}
There exists a least $t_0 > 0$ such that $s(t_0) = \sqrt{t_0}$. This $t_0$ satisfies 
$$1 < t_0 < (5/4)^{2/5} = 1.093...\:.$$
\end{theorem}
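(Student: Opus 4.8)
The plan is to study the continuous function $g$ on $(0,\infty)$ given by $g(t) = s(t) - \sqrt{t}$, to locate a sign change of $g$, and then to extract the least zero of $g$; the two explicit numerical bounds on $t_0$ will come out of the elementary estimates in Theorem \ref{3/28} rather than from the equation \ref{PI-} directly.

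For the lower bound, I would first note that if $0 < t \leqslant 1$ then $t^3 \leqslant \sqrt{t}$ (because this amounts to $t^{5/2} \leqslant 1$), so Theorem \ref{3/28} yields $s(t) < t^3 \leqslant \sqrt{t}$; hence $g < 0$ throughout $(0,1]$, and in particular no point of $(0,1]$ can satisfy $s(t) = \sqrt{t}$. For the upper bound, put $T = (5/4)^{2/5}$, so that $T^{5/2} = 5/4$. Dividing the estimate $s(t) > t^3 - \tfrac{3}{28} t^8$ of Theorem \ref{3/28} through by $\sqrt{t} > 0$, the inequality $s(T) > \sqrt{T}$ will follow once $T^{5/2} - \tfrac{3}{28} T^{15/2} > 1$ is verified; and since $T^{5/2} = 5/4$ and $T^{15/2} = (5/4)^3 = 125/64$, this is exactly the rational inequality $\tfrac14 > \tfrac{375}{1792}$, i.e. $\tfrac{73}{1792} > 0$. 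Thus $g(T) > 0$, while $g(1) = s(1) - 1 < 0$ by the first step.

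It then remains only to assemble these facts. Since $g$ is continuous with $g(1) < 0 < g(T)$, the intermediate value theorem furnishes a zero of $g$ in the open interval $(1, T)$; in particular the set $Z := \{\, t > 0 : s(t) = \sqrt{t} \,\}$ is nonempty. As $Z$ is closed in $(0,\infty)$ and contained in $(1,\infty)$ (by the first step), its infimum $t_0$ lies in $(0,\infty)$ and therefore belongs to $Z$, so $s(t_0) = \sqrt{t_0}$; moreover $t_0 > 1$ because $g < 0$ on $(0,1]$, and $t_0 < T = (5/4)^{2/5}$ because the zero produced above already lies in $(1,T)$. The whole argument is routine book\-keeping once the single exact rational inequality of the previous paragraph is checked, and that check — being exact rather than numerical — is where I expect whatever slight difficulty there is to lie; one must also be a little careful to phrase the extraction of the \emph{least} zero correctly (using closedness of $Z$ and the strict inequality $s(1) < 1$) so as to get the strict bound $t_0 > 1$.
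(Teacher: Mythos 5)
Your proposal is correct and follows essentially the same route as the paper: both use the two bounds of Theorem \ref{3/28}, observe that $t^3 \leqslant \sqrt{t}$ on $(0,1]$ to push the first crossing past $1$, and then show that the lower estimate $t^3 - \tfrac{3}{28}t^8$ already exceeds $\sqrt{t}$ at $T = (5/4)^{2/5}$. The only cosmetic difference is that you evaluate the inequality directly at $T$ (reducing it to $\tfrac{73}{1792} > 0$), whereas the paper recasts it as a sign change of the polynomial $p(r) = 3r^{15} - 28r^5 + 28$ with $r = \sqrt{t}$ and computes $p((5/4)^{1/5}) = -\tfrac{73}{64}$ — the same numerator $73$, scaled — and your careful extraction of the \emph{least} zero via closedness of $Z$ is a detail the paper leaves implicit in the phrase ``first coincidence.''
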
 

\begin{proof} 
Consider the first coincidence of $s(t)$ and $\sqrt{t}$. As $s(t) < t^3$, this first coincidence occurs after $t = 1$. As $s(t) > t^3 - \frac{3}{28} t^8$, it occurs before $t > 0$ satisfies $t^3 - \frac{3}{28} t^8 = \sqrt{t}$: seek the least value of $r := \sqrt{t} > 0$ such that $p(r) := 3 r^{15} - 28 r^5 + 28 = 0$; elementary arithmetic shows that $p((5/4)^{1/5}) = - \frac{73}{64} < 0$ while of course $p(1) = 3 > 0$. 
\end{proof} 

\medbreak 

From this point on, the behaviour of $s$ is more interesting. In brief, we expect the following: the transcendent $s$ has just entered the region `$s^2 > t$' in which it must be concave down; $s$ then returns to cross the parabola `$s^2 = t$' again, thereby entering the region `$s^2 < t$' in which it must be concave up. This oscillatory behaviour continues, $s$ crossing and recrossing the square-root  ad infinitum. We develop explicit proofs for some simple estimates regarding this behaviour by applying `Sturm comparison' techniques to the deviation of $s$ from the square-root function. 

\medbreak 

Let $f := s - \sqrt{}$ be the deviation of $s$ from the square-root. If $t > 0$ then 
$$\overset{\mdot \mdot}{f} (t) = \phi(t) - \Phi(t) f(t)$$
where 
$$\phi(t) = \frac{1}{4 t \sqrt{t}} > 0$$
and where 
$$\Phi(t) = 6(\sqrt{t} + s(t))$$
satisfies 
$$ 6 \sqrt{t} < \Phi(t) <  6(1 + \sqrt{3})\sqrt{t}$$
on account of Theorem \ref{pos} and Theorem \ref{upper}.

\medbreak 

We shall compare $f$ with a suitable trigonometric function $g$ satisfying $\overset{\mdot \mdot}{g} + \lambda g = 0$ for a specially chosen $\lambda > 0$. By direct calculation,  
$$(\overset{\mdot}{f} g - f \overset{\mdot}{g})^{\mdot} = \{ \phi + (\lambda - \Phi)f \} g$$
whence it follows that if $0 < a < b$ then 
\begin{equation} \label{FT}
\int_a^b \{ \phi + (\lambda - \Phi) f) \} g = [\overset{\mdot}{f} g - f \overset{\mdot}{g}]_a^b. \tag{{\bf FT}}
\end{equation}
In the next pair of proofs, we engineer contradictions by choosing $\lambda$ and arranging $g$ so that the left side is strictly positive while the right side is non-positive. 

\medbreak 

Our first comparison places a strict upper bound on the length of an interval over which $s$ falls short of the square-root function. 

\medbreak 

\begin{theorem} \label{0}
Let $0 < a < b$: if $s < \sqrt{}$ on the interval $(a, b)$ then 
$$b - a < \pi \: 6^{-1/2} \: a^{-1/4}.$$
\end{theorem}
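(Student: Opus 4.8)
The plan is to apply the identity \ref{FT} with a carefully chosen comparison function $g$ that vanishes at the endpoints of an interval slightly shorter than $b-a$, engineering a contradiction if $b-a$ were too large. Suppose for contradiction that $b - a \geqslant \pi\, 6^{-1/2}\, a^{-1/4}$. On $(a,b)$ we have $f < 0$, and from the stated bound $\Phi(t) > 6\sqrt{t} \geqslant 6\sqrt{a}$ for $t > a$, so $\lambda - \Phi(t) < \lambda - 6\sqrt{a}$; choosing $\lambda := 6\sqrt{a}$ makes $\lambda - \Phi(t) < 0$ on $(a,b)$, and since $f < 0$ there the product $(\lambda - \Phi)f > 0$; together with $\phi > 0$ this forces the integrand $\{\phi + (\lambda-\Phi)f\}g$ to be strictly positive wherever $g > 0$.

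Next I would pin down $g$. With $\lambda = 6\sqrt{a}$, the equation $\overset{\mdot\mdot}{g} + \lambda g = 0$ has solution $g(t) = \sin(\sqrt{\lambda}\,(t-a)) = \sin(6^{1/2} a^{1/4}(t-a))$, which vanishes at $t = a$ and next vanishes at $t = a + \pi\, 6^{-1/2}\, a^{-1/4} =: c$, and is strictly positive on $(a,c)$. By the contradiction hypothesis $c \leqslant b$. Apply \ref{FT} on $[a,c]$: the left side $\int_a^c \{\phi + (\lambda-\Phi)f\}g$ is strictly positive by the previous paragraph (the integrand is positive on the open interval $(a,c)$ where $g > 0$). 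The right side is $[\overset{\mdot}{f}g - f\overset{\mdot}{g}]_a^c = \overset{\mdot}{f}(c)g(c) - f(c)\overset{\mdot}{g}(c) - \overset{\mdot}{f}(a)g(a) + f(a)\overset{\mdot}{g}(a)$; since $g(a) = g(c) = 0$ this collapses to $-f(c)\overset{\mdot}{g}(c) + f(a)\overset{\mdot}{g}(a)$. Now $\overset{\mdot}{g}(a) = \sqrt{\lambda} > 0$ and $\overset{\mdot}{g}(c) = \sqrt{\lambda}\cos(\pi) = -\sqrt{\lambda} < 0$, while $f(a) \leqslant 0$ and $f(c) \leqslant 0$ (the hypothesis gives $f < 0$ on the open interval, and $f \leqslant 0$ at the endpoints by continuity). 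Hence $f(a)\overset{\mdot}{g}(a) \leqslant 0$ and $-f(c)\overset{\mdot}{g}(c) \leqslant 0$, so the right side is non-positive — contradicting the strictly positive left side. Therefore $b - a < \pi\, 6^{-1/2}\, a^{-1/4}$.

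The main obstacle, and the point needing the most care, is the boundary term analysis: one must confirm that the comparison function $g$ can be taken to vanish exactly at $a$ and at $a + \pi\,6^{-1/2}\,a^{-1/4}$ so that the $\overset{\mdot}{f}g$ contributions drop out entirely, and that the signs of $\overset{\mdot}{g}$ at the two endpoints combine with the signs of $f$ to give a non-positive right-hand side. A secondary subtlety is the strictness: the left side of \ref{FT} is strictly positive because $\phi > 0$ throughout, so even though $(\lambda - \Phi)f$ might be only weakly signed at isolated points, the integral is genuinely positive — this is what rules out equality $b - a = \pi\,6^{-1/2}\,a^{-1/4}$ and yields the strict inequality claimed. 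One should also note that the estimate $\Phi(t) > 6\sqrt{t}$ is used only through its consequence $\Phi(t) > 6\sqrt{a}$ on $(a,b)$, so monotonicity of $\sqrt{}$ is all that is invoked from the earlier bounds.
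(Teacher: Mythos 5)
Your proof is correct and is essentially the paper's argument: the same Sturm-type comparison via the identity \ref{FT}, the same sign analysis on the two sides, the same contradiction. The only cosmetic difference is in the bookkeeping — the paper tunes $\lambda$ so that the half-period of $g$ equals $b-a$ exactly and then shows $\lambda\leqslant 6\sqrt a$, whereas you fix $\lambda=6\sqrt a$ and apply \ref{FT} on the sub-interval $[a,c]$ with $c=a+\pi/\sqrt\lambda\leqslant b$; both variants are valid and equivalent in content.
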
 

\begin{proof} 
Deny the conclusion: say $b - a \geqslant \pi \: 6^{-1/2} \: a^{-1/4}$ and aim at a contradiction. Let $\lambda > 0$ be given by $\pi / \sqrt{\lambda} = b - a$; it follows that $\lambda \leqslant 6 \sqrt{a}$.  Note that if $a \leqslant t$ then $\Phi(t) > 6 \sqrt{t} \geqslant 6 \sqrt{a}$ so that $\lambda < \Phi$ on $[a, b]$. The gap between consecutive zeros of a nontrivial solution $g$ to $\overset{\mdot \mdot}{g} + \lambda g = 0$ being $b - a$, we may choose such $g$ to satisfy $g > 0$ on $(a, b)$ in which case $g(a) = 0 = g(b)$ and $\overset{\mdot}{g}(a) > 0 > \overset{\mdot}{g}(b)$. The integral on the left side of \ref{FT} has integrand strictly positive on $(a, b)$ and is therefore strictly positive, whereas the right side of \ref{FT} reduces to $f(a) \overset{\mdot}{g}(a) - f(b) \overset{\mdot}{g}(b) \leqslant 0$ because $f \leqslant 0$ on $[a, b]$; this contradiction concludes the proof. 
\end{proof} 

\medbreak 

Our second comparison places a strict lower bound on the length of an interval over which $s$ exceeds the square-root. 

\medbreak 

\begin{theorem} \label{rt3}
Let $0 < a < b$:  if $s > \sqrt{}$ on the interval $(a, b)$  and if $s = \sqrt{}$ at its ends then
$$b - a > \pi \: (6(1 + \sqrt{3}))^{- 1/2} \: b^{-1/4}.$$
\end{theorem}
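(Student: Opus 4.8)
The plan is to run the same Sturm-comparison machinery as in Theorem \ref{0}, but with every sign reversed: here $f := s - \sqrt{}$ is non-negative on $[a,b]$ and vanishes at both endpoints, rather than being non-positive. I would begin by denying the conclusion, so that $b - a \leqslant \pi\,(6(1+\sqrt{3}))^{-1/2}\,b^{-1/4}$, and setting $\lambda := 6(1+\sqrt{3})\sqrt{b} > 0$; with this choice $\pi/\sqrt{\lambda} \geqslant b - a$. The virtue of this $\lambda$ is that it now dominates $\Phi$ on the whole of $[a,b]$: combining Theorem \ref{pos} and Theorem \ref{upper} gives $\Phi(t) < 6(1+\sqrt{3})\sqrt{t} \leqslant 6(1+\sqrt{3})\sqrt{b} = \lambda$ for each $t \in [a,b]$, so $\lambda - \Phi > 0$ there.

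Next I would take the comparison function $g(t) := \sin(\sqrt{\lambda}\,(t-a))$, a nontrivial solution of $\overset{\mdot\mdot}{g} + \lambda g = 0$ whose consecutive zeros lie $\pi/\sqrt{\lambda}$ apart. Since $b - a \leqslant \pi/\sqrt{\lambda}$, this $g$ satisfies $g > 0$ on $(a,b)$, together with $g(a) = 0$, $\overset{\mdot}{g}(a) = \sqrt{\lambda} > 0$ and $g(b) = \sin(\sqrt{\lambda}\,(b-a)) \geqslant 0$. Feeding $\lambda$ and $g$ into the identity \ref{FT}, the left side has integrand $\phi g + (\lambda - \Phi)fg$, which is strictly positive on $(a,b)$ because $\phi > 0$, $g > 0$, $\lambda - \Phi > 0$ and $f \geqslant 0$; hence the left side is strictly positive. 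On the right side, the hypotheses $f(a) = f(b) = 0$ together with $g(a) = 0$ collapse the bracket $[\overset{\mdot}{f} g - f \overset{\mdot}{g}]_a^b$ to the single term $\overset{\mdot}{f}(b)\,g(b)$; and this is $\leqslant 0$ since $f > 0$ on $(a,b)$ with $f(b) = 0$ forces $\overset{\mdot}{f}(b) \leqslant 0$, while $g(b) \geqslant 0$. The strictly positive left side and non-positive right side are incompatible, so the denial fails and the asserted strict inequality holds.

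I do not expect a substantive obstacle: the argument is structurally the mirror image of Theorem \ref{0}. The points that need care are precisely the ones that differ from that theorem. Because $f$ carries the opposite sign, I must anchor the single zero of $g$ at the left endpoint $a$ alone, so that the surviving boundary contribution $\overset{\mdot}{f}(b)g(b)$ comes out with the right (non-positive) sign; this is also why the stronger hypothesis that $s$ meet $\sqrt{}$ at \emph{both} ends of $(a,b)$ is needed, since otherwise the boundary terms at $a$ and $b$ cannot both be controlled. Finally, the factor $6(1+\sqrt{3})$ in the statement, in place of the bare $6$ in Theorem \ref{0}, is dictated by the need to dominate $\Phi$ rather than be dominated by it, forcing use of the upper bound $\Phi(t) < 6(1+\sqrt{3})\sqrt{t}$ from Theorem \ref{upper}.
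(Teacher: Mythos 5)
Your proof is correct and follows essentially the same Sturm-comparison route as the paper: deny the conclusion, take $\lambda = 6(1+\sqrt{3})\sqrt{b}$ so that $\lambda > \Phi$ on $[a,b]$ via Theorem \ref{upper}, choose $g$ with $\overset{\mdot\mdot}{g}+\lambda g = 0$ positive on $(a,b)$, and contradict the sign of the two sides of \ref{FT}. The only cosmetic difference is that you anchor $g$ concretely with $g(a)=0$ so the boundary bracket collapses to the single term $\overset{\mdot}{f}(b)g(b)$, whereas the paper leaves $g$ slightly freer and bounds $\overset{\mdot}{f}(b)g(b) - \overset{\mdot}{f}(a)g(a) \leqslant 0$ using $\overset{\mdot}{f}(a)\geqslant 0 \geqslant \overset{\mdot}{f}(b)$; both handle the endpoints equally well.
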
 

\begin{proof} 
Deny the conclusion: say $b - a \leqslant  \pi \: (6(1 + \sqrt{3}))^{- 1/2} \: b^{-1/4}$ and aim at a contradiction. If $0 < t \leqslant b$ then $\Phi(t) < 6(1 + \sqrt{3})\sqrt{t} \leqslant 6(1 + \sqrt{3})\sqrt{b}$; the choice $\lambda = 6(1 + \sqrt{3})\sqrt{b}$ therefore ensures that $\lambda > \Phi$ on $[a, b]$. The gap $\pi/\sqrt{\lambda}$ between consecutive zeros of a nontrivial $g$ satisfying $\overset{\mdot \mdot}{g} + \lambda g = 0$ is now $b - a$ at least; consequently, we may adjust $g$ so as to arrange that $g > 0$ on $(a, b)$. The integral on the left side of \ref{FT} is again strictly positive, as its integrand is a strictly positive function on $(a, b)$. The right side of \ref{FT} reduces to $\overset{\mdot}{f}(b) g(b) - \overset{\mdot}{f}(a)g(a) \leqslant 0$ because $\overset{\mdot}{f}(a) \geqslant 0 \geqslant \overset{\mdot}{f}(b)$. This contradiction concludes the proof. 
\end{proof} 

\medbreak 

As the difference $s - \sqrt{}$ has constant sign between consecutive zeros, we may combine the results of these comparisons as follows: if $a < b$ are consecutive crossings of $s$ and the square-root function then 
$$b - a \geqslant \pi \: 6^{-1/2} \: a^{-1/4} \; \Rightarrow \; s > \sqrt{} \; \; {\rm on} \; \; (a, b) \; \Rightarrow \; b - a >  \pi \: (6(1 + \sqrt{3}))^{- 1/2} \: b^{-1/4}$$
while 
$$b - a \leqslant  \pi \: (6(1 + \sqrt{3}))^{- 1/2} \: b^{-1/4} \; \Rightarrow \; s < \sqrt{} \; \; {\rm on} \; \; (a, b) \; \Rightarrow \; b - a < \pi \: 6^{-1/2} \: a^{-1/4}.$$

\bigbreak

\section*{Remarks} 

\medbreak 

We draw our account to a close with some very brief remarks on improvements and on the further theory. 

\medbreak 

The estimates in Theorem \ref{3/28} can be sharpened. For instance, if $0 < t < (28/3)^{1/5}$ (so that $ t^3 - \frac{3}{28} t^8 > 0$) then we may insert $s(t) >  t^3 - \frac{3}{28} t^8$ in $\overset{\mdot \mdot}{s}(t) = 6 t - 6 s(t)^2$ to obtain 
the inequality $\overset{\mdot \mdot}{s}(t) < 6 t - 6 t^6 + \frac{9}{7} t^{11} - \frac{27}{392} t^{16}$ whence two integrations result in the squeeze
$$t^3 - \frac{3}{28} t^8 < s(t) <  t^3 - \frac{3}{28} t^8 + \frac{3}{364} t^{13} - \frac{3}{13328} t^{18}.$$
The leftmost polynomial here is the eighth-order Taylor polynomial of $s$, while the rightmost polynomial differs from the eighteenth-order Taylor polynomial of $s$ only in the coefficient of its last term. Naturally, this squeeze leads to a considerable tightening of the bounds on $t_0$ derived in Theorem \ref{t0}. 

\medbreak 

In Theorem \ref{0} and Theorem \ref{rt3} we addressed the oscillation of $s$ about the square-root function and we placed bounds on the times between successive oscillations, but we did not bound the amplitudes of these oscillations. Of course, we do have some information: Theorem \ref{pos} and Theorem \ref{upper} together ensure that if $t > 0$ then the ratio $s(t)/\sqrt{t}$ varies strictly between $0$ and $\sqrt{3}$; further inspection replaces this upper bound by $\sqrt{2}$. In fact, as $t$ tends to infinity, the ratio $s(t)/\sqrt{t}$ tends to unity and indeed the difference $s(t) - \sqrt{t}$ tends to zero. More precise information is available: the application of standard techniques from asymptotic analysis indicates that $s(t) - \sqrt{t}$ is dominated by a multiple of $t^{-1/8}$ as $t \ra \infty$; for a discussion of this, see [1]. 

\medbreak 

Our account of the transcendent $s$ suggests that the oscillatory behaviour has greater interest than the finite-time blow-up; this is because we limited our account to the reals. By viewing \ref{P_I} as a {\it complex} equation, we see beyond the finite-time blow-up and realize that $t_{\infty}$ is merely the first in an infinite sequence of second-order poles along the ray from the origin through $t_{\infty}$. Invariance of the complex $s$ under the fifth-roots of unity ensures that this behaviour is repeated along the rays from the origin making angles $\pm 2 \pi/5$ and $\pm 4 \pi/5$ with this one. More than this, the complex $s$ is meromorphic and has an infinite array of second-order poles in the open circular sectors of angle $2 \pi/5$ centred on these five rays. For an introduction to the complex theory, see [3]; [2] contains much more detail and many additional references on all six Painlev\'e equations.  

\bigbreak

\begin{center} 
{\small R}{\footnotesize EFERENCES}
\end{center} 
\medbreak

[1] C.M. Bender and S.A. Orszag, {\it Advanced Mathematical Methods for Scientists and Engineers I}, Mc Graw-Hill (1978); Springer (1999). 

\medbreak 

[2] V.I. Gromak, I. Laine and S. Shimomura, {\it Painlev\'e Differential Equations in the Complex Plane}, de Gruyter (2002). 

\medbreak 

[3] E. Hille, {\it Ordinary Differential Equations in the Complex Domain}, Wiley-Interscience (1976); Dover Publications (1997). 

\medbreak

\end{document}